\newtheorem{theorem}{Theorem}[section]
\newtheorem{lemma}[theorem]{Lemma}
\newtheorem{corollary}[theorem]{Corollary}
\newtheorem{proposition}[theorem]{Proposition}
\theoremstyle{definition}
\newtheorem{definition}[theorem]{Definition}
\makeatletter \@addtoreset{equation}{section} \makeatother
\begin{document}

\title{On upper bounds on the smallest size of a saturating set in a
projective plane\thanks{The research of  D. Bartoli, M. Giulietti, S. Marcugini, and F.~Pambianco was
 supported in part by Ministry for Education, University
and Research of Italy (MIUR) (Project ``Geometrie di Galois e
strutture di incidenza'')
 and by the Italian National Group for Algebraic and Geometric Structures and their Applications
 (GNSAGA - INDAM).
 The research of A.A.~Davydov was carried out at the IITP RAS at the expense of the Russian
Foundation for Sciences (project 14-50-00150).}}
\date{}
\maketitle
\begin{center}
{\sc Daniele Bartoli}\\
{\sc\small Dipartimento di Matematica  e Informatica,
   Universit{\`a} degli Studi di Perugia}\\
{\sc\small Perugia, 06123, Italy}\\ \emph {E-mail address:} daniele.bartoli@unipg.it\medskip\\
{\sc Alexander A. Davydov}\\
{\sc\small Institute for Information Transmission Problems (Kharkevich institute)}\\
 {\sc\small Russian Academy of
 Sciences}\\ {\sc\small GSP-4, Moscow, 127994, Russian Federation}\\\emph {E-mail address:} adav@iitp.ru\medskip\\
 {\sc Massimo Giulietti, Stefano Marcugini, Fernanda Pambianco}\\
 {\sc\small Dipartimento di Matematica  e Informatica,
  Universit{\`a} degli Studi di Perugia}\\
 {\sc\small Perugia, 06123, Italy}\\
 \emph{E-mail address:} massimo.giulietti, stefano.marcugini, fernanda.pambianco@unipg.it
\end{center}
\begin{abstract}
In a projective plane $\Pi _{q}$ (not necessarily Desarguesian)
of order $q,$ a point subset $S$ is saturating (or dense) if
any point of $\Pi _{q}\setminus S$ is collinear with two points
in$~S$. Using probabilistic methods, the following upper bound
on the smallest size $ s(2,q)$ of a saturating set in $\Pi
_{q}$ is proved:
\begin{equation*}
s(2,q)\leq 2\sqrt{(q+1)\ln (q+1)}+2\thicksim 2\sqrt{q\ln q}.
\end{equation*}
We also show that for any constant $c\ge 1$ a random point set of size $k$
in $\Pi _{q}$ with $
2c\sqrt{(q+1)\ln(q+1)}+2\le k<\frac{q^{2}-1}{q+2}\thicksim q$ is a saturating
set with probability greater than $1-1/(q+1)^{2c^{2}-2}.$ Our
probabilistic approach is also applied to multiple saturating
sets. A point set $S\subset \Pi_{q}$ is $(1,\mu)$-saturating if
for every point $Q$ of $\Pi _{q}\setminus S$ the number of
secants of $S$ through $Q$ is at least $\mu $, counted with
multiplicity. The multiplicity of a secant $ \ell $ is computed
as ${\binom{{\#(\ell \,\cap S)}}{{2}}}.$ The following upper
bound on the smallest size $s_{\mu }(2,q)$ of a $(1,\mu)$-saturating set in $\Pi_{q}$ is proved:
\begin{equation*}
s_{\mu }(2,q)\leq 2(\mu +1)\sqrt{(q+1)\ln (q+1)}+2\thicksim 2(\mu +1)\sqrt{
q\ln q}\,\text{ for }\,2\leq \mu \leq \sqrt{q}.
\end{equation*}

By using inductive constructions,
 upper bounds on the smallest size
of a saturating set (as well as on a $(1,\mu)$-saturating set) in
the projective space $PG(N,q)$ are obtained.

All the results are also stated in terms of linear covering codes.
\end{abstract}

\section{Introduction}

We denote by $\Pi _{q}$ a projective plane (not necessarily
Desarguesian) of order $q$ and by $PG(2,q)$ the projective
plane over the Galois field with $q$ elements.

\begin{definition}
\label{def1_usual satur} A point set $S\subset \Pi _{q}$ is
\emph{saturating} if any point of $\Pi _{q}\setminus S$ is
collinear with two points in $S$.
\end{definition}

Saturating sets are considered, for example, in \cite
{BFMP-JG2013,BorSzTic,BrPlWi,DavCovRad2,DGMP-AMC,DMP-JCTA2003,DavOst-EJC,FainaGiul,GacsSzonyi,%
Giul-plane,Giul2013Survey,GiulTor,Janwa1990,Kovacs,MP_Austr2003,Ughi};
see also the references therein. It should be noted that
saturating sets are also called ``saturated
sets'' \cite
{DavCovRad2,Janwa1990,Kovacs,Ughi}, ``spanning
sets'' \cite{BrPlWi}, ``dense sets''
\cite{BorSzTic,FainaGiul,GacsSzonyi,Giul-plane,GiulTor}, and ``1-saturating sets''
\cite{BDGMP-AMC2015,BDGMP-arXiv2015,DGMP-AMC,DMP-JCTA2003,DavOst-EJC}.

The homogeneous coordinates of the points of a saturating set
of size $k$ in $PG(2,q)$ form a parity check matrix of a
$q$-ary linear code with length $k,$ codimension 3, and
covering radius 2. For an introduction to covering codes see
\cite{Handbook-coverings,CHLS-bookCovCod}. An online
bibliography on covering codes is given in \cite{LobstBibl}.

The main problem in this context is to find small saturating
sets (i.e. short covering codes). Let $s(2,q)$ denote the
smallest size of a saturating set in $\Pi _{q}$. In
\cite{BorSzTic}, by using the probabilistic approach
previously introduced in \cite{Kovacs}, the following upper
bound is proved:
\begin{equation}
s(2,q)<3\sqrt{2}\sqrt{q\ln q}<5\sqrt{q\ln q}.  \label{eq1_1sat}
\end{equation}

Surveys on random constructions for geometrical objects can be
found in \cite {BorSzTic,GacsSzonyi,KV,Kovacs}; see also the
references therein. Saturating sets in $PG(2,q)$ obtained by
algebraic constructions or computer search can be found in
\cite
{BFMP-JG2013,BrPlWi,DavCovRad2,DGMP-AMC,DMP-JCTA2003,DavOst-EJC,FainaGiul,Giul-plane,%
Giul2013Survey,GiulTor,MP_Austr2003,Ughi}.

In this paper, we use probabilistic methods to obtain new upper
bounds on $s(2,q)$. Our main results are Theorems \ref{th1} and
\ref{th2} below.

\begin{theorem}
\label{th1} For the smallest size $s(2,q)$ of a saturating set
in a projective plane of order $q$ the following upper bound
holds:
\begin{equation}
s(2,q)\leq 2\sqrt{(q+1)\ln (q+1)}+2\thicksim 2\sqrt{q\ln q}.
\label{eq1_satsetsize}
\end{equation}
\end{theorem}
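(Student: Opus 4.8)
The plan is to apply the first--moment (union bound) method to a uniformly random point set of fixed size. Let $k$ be an integer close to $2\sqrt{(q+1)\ln(q+1)}$, and let $S$ be a $k$-subset of the $q^{2}+q+1$ points of $\Pi_{q}$ chosen uniformly at random. Call a point $Q$ \emph{bad} if $Q\notin S$ and no line through $Q$ is a $2$-secant of $S$, i.e.\ every line through $Q$ meets $S$ in at most one point; then $S$ is saturating exactly when there is no bad point. The goal is to show that for $k=2\sqrt{(q+1)\ln(q+1)}+2$ the expected number of bad points is strictly less than $1$, so that some realization of $S$ has no bad point and is a saturating set of size $k$, which yields \eqref{eq1_satsetsize}.

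The first key step is an exact evaluation of the probability that a fixed $Q$ is bad. The $q+1$ lines through $Q$ partition the remaining $q^{2}+q$ points into $q+1$ classes of size $q$, so $Q$ is bad iff $S$ avoids $Q$ and picks at most one point from each class. Counting the favourable configurations (choose which $k$ of the $q+1$ lines carry a point of $S$, then one of the $q$ non-$Q$ points on each) against all $\binom{q^{2}+q+1}{k}$ choices of $S$ gives
\begin{equation*}
\Pr[Q\text{ bad}]=\frac{\binom{q+1}{k}q^{k}}{\binom{q^{2}+q+1}{k}}=\prod_{i=0}^{k-1}\frac{(q+1-i)q}{q^{2}+q+1-i}=\prod_{i=0}^{k-1}\left(1-\frac{i(q-1)+1}{q^{2}+q+1-i}\right).
\end{equation*}
In passing I would note that this forces $k\le q+1$, which holds comfortably in the relevant range.

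Next I would take logarithms and use $\ln(1-x)\le -x$, obtaining $\Pr[Q\text{ bad}]\le e^{-\Sigma}$ with $\Sigma=\sum_{i=0}^{k-1}\frac{i(q-1)+1}{q^{2}+q+1-i}$; lower-bounding each denominator by $q^{2}+q+1$ gives $\Sigma\ge\bigl((q-1)(k-1)k/2+k\bigr)/(q^{2}+q+1)$. The union bound over all points then reads
\begin{equation*}
\Pr[S\text{ not saturating}]\le(q^{2}+q+1)\,e^{-\Sigma}.
\end{equation*}
Taking $k=2c\sqrt{(q+1)\ln(q+1)}+2$ makes $(k-1)k\ge 4c^{2}(q+1)\ln(q+1)$ with a surplus coming precisely from the $+2$, and a careful accounting shows $\Sigma\ge 2c^{2}\ln(q+1)$, so that $e^{-\Sigma}\le(q+1)^{-2c^{2}}$. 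Combined with $q^{2}+q+1<(q+1)^{2}$ this yields
\begin{equation*}
\Pr[S\text{ not saturating}]<(q+1)^{2}(q+1)^{-2c^{2}}=(q+1)^{2-2c^{2}}=\frac{1}{(q+1)^{2c^{2}-2}},
\end{equation*}
reproducing the probability estimate announced in the abstract; for $c=1$ the probability of failure is strictly below $1$, forcing the existence of a saturating set of the stated size.

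I expect the main obstacle to be making the passage from the exact product to the clean exponential bound fully rigorous rather than merely asymptotic. The step $\ln(1-x)\le -x$ discards positive lower-order terms, and one must verify that the surplus in $(k-1)k$ over the target $4c^{2}(q+1)\ln(q+1)$ genuinely dominates the discrepancy between $q^{2}+q+1$ and $(q+1)^{2}$ (equivalently, that the $-i$ in the denominators $q^{2}+q+1-i$ and the constant term $+1$ in the numerators do not spoil the inequality $\Sigma\ge 2c^{2}\ln(q+1)$). This is exactly where the additive constant does real work: with $k$ replaced by $k-2$ the chain of inequalities breaks, whereas the $+2$ supplies slack of order $\sqrt{q/\ln q}$ times what is needed, so it comfortably absorbs every lower-order correction. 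Tracking these terms, together with pinning down the admissible range of $k$, is the technical heart of the argument.
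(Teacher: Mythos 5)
Your proposal is correct and is essentially the paper's own proof: the same uniform random $k$-subset, the same exact non-covering probability $\binom{q+1}{k}q^{k}\big/\binom{q^{2}+q+1}{k}$, the same passage via $1-x\leq e^{-x}$ to an exponential bound, and the same union bound over all $q^{2}+q+1$ points with $k\approx 2c\sqrt{(q+1)\ln(q+1)}+2$, recovering both the existence bound and the failure probability $(q+1)^{2-2c^{2}}$ of Theorem~\ref{th2}. The ``careful accounting'' you defer is exactly the paper's side condition $w<\frac{q^{2}-1}{q+2}\sim q$, where the linear term in $w(w+1)$ absorbs the gap between $q^{2}-1$ and $q^{2}+q+1$, so your assessment of where the slack comes from is accurate.
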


\begin{theorem}
\label{th2} Let $c$ be a real number greater than or equal to $1$ and
 let $k$ be an integer such that
\begin{equation*}
2c\sqrt{(q+1)\ln (q+1)}+2\le k<\frac{q^{2}-1}{q+2}\sim q.
\end{equation*}
Then in a projective plane  of order $q$ a random point set of
size $k$ is a saturating set with probability grater than
\begin{equation}
1-\frac{1}{(q+1)^{2c^{2}-2}}\,.  \label{eq1_bigprob}
\end{equation}
\end{theorem}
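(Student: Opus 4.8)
The plan is to bound the failure probability by a first-moment (union bound) argument over the points of $\Pi_q$, where $S$ is a uniformly random $k$-subset of the $q^2+q+1$ points. For a fixed point $Q$, call $Q$ \emph{bad} if $Q\notin S$ and no secant of $S$ passes through $Q$. The $q+1$ lines of the pencil through $Q$ partition the remaining $q^{2}+q$ points into $q+1$ classes of size $q$, so $Q$ is bad exactly when $Q\notin S$ and $S$ meets each line through $Q$ in at most one point. Counting such $k$-sets gives $\binom{q+1}{k}q^{k}$ (choose which $k$ of the $q+1$ lines are met, then one of the $q$ non-$Q$ points on each), and this count is the same at every point since the local incidence picture is identical at each point of any projective plane of order $q$. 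Hence
$$\Pr[Q\text{ bad}]=\frac{\binom{q+1}{k}q^{k}}{\binom{q^{2}+q+1}{k}},\qquad \Pr[S\text{ not saturating}]\le (q^{2}+q+1)\,\frac{\binom{q+1}{k}q^{k}}{\binom{q^{2}+q+1}{k}},$$
so it suffices to show the right-hand estimate is strictly below $(q+1)^{2-2c^{2}}$.

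Next I would rewrite the binomial ratio as the product $\prod_{i=0}^{k-1}\frac{q(q+1-i)}{q^{2}+q+1-i}$. For every index $i\le k-1$ the hypothesis $k<\frac{q^{2}-1}{q+2}$ forces $k\le q$, hence $i\le q-1$ and $q^{2}+q+1-i>q^{2}$; consequently each factor is strictly below $\frac{q+1-i}{q}$. Pulling out the $i=0$ term, the product is therefore smaller than
$$\frac{q+1}{q}\prod_{m=0}^{k-2}\left(1-\frac{m}{q}\right)\le \frac{q+1}{q}\exp\left(-\frac{(k-1)(k-2)}{2q}\right),$$
using $1-x\le e^{-x}$ and $\sum_{m=0}^{k-2}m=\binom{k-1}{2}$.

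Then I would substitute the lower bound on $k$. From $k\ge 2c\sqrt{(q+1)\ln(q+1)}+2$ I get $k-2\ge 2c\sqrt{(q+1)\ln(q+1)}$ and $k-1$ larger still, so $(k-1)(k-2)>4c^{2}(q+1)\ln(q+1)$ and the exponent exceeds $\frac{2c^{2}(q+1)}{q}\ln(q+1)$. Plugging in, the whole bound is below $\frac{(q^{2}+q+1)(q+1)}{q}(q+1)^{-2c^{2}(q+1)/q}$, and the target inequality $<(q+1)^{2-2c^{2}}$ reduces, after dividing by $(q+1)^{2-2c^{2}}$, to $1+\frac{1}{q^{2}+q}<(q+1)^{2c^{2}/q}$, i.e. to $\ln\!\left(1+\frac{1}{q^{2}+q}\right)<\frac{2c^{2}\ln(q+1)}{q}$. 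This holds for all $q\ge 2$ and $c\ge 1$, since the left side is below $1/q^{2}$ while the right side is at least $2\ln(q+1)/q$.

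The conceptual steps are routine; the real work is quantitative bookkeeping so that each $\sim$ becomes a genuine one-sided inequality uniformly in $q$ and $c\ge 1$. The decisive subtlety is the additive constant $2$ in the lower bound on $k$: it is precisely what yields $k-2\ge 2c\sqrt{(q+1)\ln(q+1)}$, so that $(k-1)(k-2)$ clears the target $4c^{2}(q+1)\ln(q+1)$ and the small surplus $\frac{2c^{2}\ln(q+1)}{q}$ in the exponent absorbs the lower-order prefactor $1+\frac{1}{q^{2}+q}$. I expect the only delicate points to be verifying this last inequality cleanly and confirming that the stated range of $k$ is nonempty, i.e. that $2c\sqrt{(q+1)\ln(q+1)}+2<\frac{q^{2}-1}{q+2}$, where a careful treatment of small $q$ or a short monotonicity argument may be required.
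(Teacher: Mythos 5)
Your proof is correct and takes essentially the same route as the paper: the same exact count $\binom{q+1}{k}q^{k}\big/\binom{q^{2}+q+1}{k}$ for the per-point failure probability (the paper's $\pi$ with $k=w+1$), the same union bound over the $q^{2}+q+1$ points, and the same $1-x\le e^{-x}$ product estimate, differing only in bookkeeping --- you bound each factor by $\frac{q+1-i}{q}$ and absorb the prefactor $1+\frac{1}{q^{2}+q}$ at the end, while the paper bounds the factors by $1-\frac{i(q-1)}{q^{2}+q+1}$ and invokes $k<\frac{q^{2}-1}{q+2}$ to reach the exponent $\frac{w^{2}}{2q+2}$ directly. Both yield the strict bound $1-(q+1)^{2-2c^{2}}$, so the argument stands as written.
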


Theorem \ref{th1} improves the constant term of
\eqref{eq1_1sat}. It should be noted that our approach is
different from  those in \cite{BorSzTic,Kovacs}, where random
sets lying on two or three lines are considered; in this paper
arbitrary random sets are dealt with.

Theorem \ref{th1} can be expressed in terms of \emph{covering codes}.
The \emph{length function} $\ell (R,r,q)$ denotes the smallest
length of a $ q $-ary linear code with covering radius $R$ and
codimension~$r$; see \cite
{Handbook-coverings,BrPlWi,CHLS-bookCovCod}. Theorem~\ref{th1}
can be read as follows.

\begin{corollary}
The following upper bound on the length function holds.
\begin{equation*}
\ell (2,3,q)\leq 2\sqrt{(q+1)\ln (q+1)}+2\thicksim 2\sqrt{q\ln q}.
\end{equation*}
\end{corollary}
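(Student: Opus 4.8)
The plan is to obtain the Corollary as a direct transcription of Theorem~\ref{th1} into the language of covering codes, via the dictionary between saturating sets in $PG(2,q)$ and linear codes that was already recalled in the Introduction. First I would fix the Desarguesian plane $PG(2,q)$, whose points are the $1$-dimensional subspaces of $\mathbb{F}_q^3$, each represented by a nonzero column vector up to a nonzero scalar. Given a saturating set $S$ of size $k$, I would collect homogeneous coordinate vectors of its points as the columns of a $3\times k$ matrix $H$ over $\mathbb{F}_q$. Since a saturating set cannot lie on a single line (a point off that line would never be collinear with two of its points), the columns of $H$ span $\mathbb{F}_q^3$, so $H$ has rank $3$ and is the parity-check matrix of a $[k,k-3]_q$ code $C$ of codimension $3$.

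The key step is to check that the saturating property of $S$ is equivalent to $C$ having covering radius $2$. Recall that the covering radius of $C$ is the least $R$ such that every syndrome in $\mathbb{F}_q^3$ is a linear combination of at most $R$ columns of $H$. A nonzero syndrome that is a scalar multiple of a single column corresponds to a point of $S$ itself, while a syndrome spanned by exactly two columns corresponds to a point lying on the secant joining the two associated points of $S$. Hence every point of $PG(2,q)\setminus S$ is covered with radius $2$ precisely when it is collinear with two points of $S$, which is exactly Definition~\ref{def1_usual satur}. Moreover, covering radius $\le 1$ would force every point of the plane to be a column of $H$, i.e. $S=PG(2,q)$; so for a proper saturating set the covering radius is exactly $2$. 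This identifies $\ell(2,3,q)$ with the smallest size of a saturating set in $PG(2,q)$.

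Finally, since $PG(2,q)$ is a projective plane of order $q$, Theorem~\ref{th1} applies to it and gives
\begin{equation*}
\ell(2,3,q)\leq 2\sqrt{(q+1)\ln(q+1)}+2\thicksim 2\sqrt{q\ln q},
\end{equation*}
which is the assertion. I do not expect any genuine obstacle here: all the substance lies in Theorem~\ref{th1}, and the only points requiring care are the bookkeeping that matches weight-$1$ syndromes with points of $S$ and weight-$2$ syndromes with secants, and the verification that the covering radius is exactly $2$ rather than smaller. Both are immediate from the incidence geometry of $PG(2,q)$.
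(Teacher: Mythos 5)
Your proof is correct and follows essentially the same route as the paper, which states no separate proof for this corollary but treats it as an immediate translation of Theorem~\ref{th1} via the saturating-set/covering-code dictionary recalled in the Introduction (columns of the parity-check matrix $=$ homogeneous coordinates of the points of $S$, covering radius $2$ $=$ every syndrome is a combination of at most two columns). Your additional verifications --- that a saturating set spans the plane, so the code has codimension exactly $3$, and that the covering radius is exactly $2$ rather than $1$ --- are exactly the routine bookkeeping the paper leaves implicit, and only the inequality $\ell(2,3,q)\leq s(2,q)$ (not the full equality you mention) is needed for the bound.
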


Our probabilistic approach can also be applied to
\emph{multiple saturating} sets.

\begin{definition}
\label{def2_mu_satur}A point set $S\subset \Pi _{q}$ is $(1,\mu)
$-\emph{saturating} if for every point $Q$ of $\Pi
_{q}\setminus S$ the number of secants of $S$ through $Q$ is at
least $\mu $, counted with multiplicity. Here the multiplicity
of a secant $\ell $ is computed as ${\binom{{\#(\ell \,\cap
S)}}{{2}}}.$
\end{definition}

For $\mu =1,$ a $(1,\mu)$-saturating set is a saturating set as in
Definition \ref{def1_usual satur}.

The homogeneous coordinates of the points of a $(1,\mu)
$-saturating set of size $k$ in $PG(2,q)$
 form a
parity check matrix of a $q$-ary linear code with length $k,$
codimension 3, covering radius 2. Such a code is a $(2,\mu
)$-multiple covering of the farthest-off-points ($ (2,\mu
)$-MCF code or simply MCF code, for short). For an introduction
to multiple saturating sets and MCF codes see \cite[Chapters
13,\thinspace 14] {CHLS-bookCovCod}, \cite
{BDGMP-AMC2015,BDGMP-arXiv2015,Giul2013Survey,HonkLits1996,PDBGM-ExtendAbstr},
and references therein.

The main problem in this context is to find small $(1,\mu)
$-saturating sets (i.e. short MCF codes). Let $s_{\mu }(2,q)$
be the smallest size of a $(1,\mu) $-saturating set in $\Pi _{q}$.
Our main results on $(1,\mu)$-saturating sets in $\Pi _{q}$ are the
following.

\begin{theorem}
\label{th3} Let $\mu \geq 2.$ For the smallest size $s_{\mu
}(2,q)$ of a $(1,\mu)$-saturating set in a projective plane of order $q$ the following upper
bounds hold.
\begin{equation}
s_{\mu }(2,q)\leq 2D_{\mu }\sqrt{(q+1)\ln (q+1)}+2\thicksim 2D_{\mu }\sqrt{
q\ln q},  \label{eq1_mu_satsetsize}
\end{equation}
where
\begin{equation}
D_{\mu }\leq \left\{
\begin{array}{cccc}
2.4 & \text{for} & \mu =2, & q\geq 97 \\
2.6 & \text{for} & \mu =3, & q\geq 181 \\
2.8 & \text{for} & \mu =4, & q\geq 125 \\
\mu +1 & \text{for} & \mu \leq \sqrt{q}, & q\geq 4 \\
2\mu -1 & \text{for} & \mu \leq \frac{1}{2}((1-\delta )q-\delta +1)+1, &
q\geq 3
\end{array}
\right. ,  \label{eq1_Dmu}
\end{equation}
\begin{equation}
\delta =\frac{1}{\sqrt{(q+1)\ln \left( q+1\right) }}.  \label{eq1_delta}
\end{equation}
\end{theorem}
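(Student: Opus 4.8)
The plan is to run the same probabilistic scheme that underlies Theorem~\ref{th1}, now adapted to the multiplicity condition of Definition~\ref{def2_mu_satur}. I would take $S$ to be a uniformly random $k$-subset of the $q^{2}+q+1$ points of $\Pi_{q}$ and bound, by a union bound over all points, the probability that some point fails to be $(1,\mu)$-saturated. The key structural fact, valid in any plane $\Pi_{q}$ regardless of its collineation group, is that the $q+1$ lines through a fixed point $Q$ partition the remaining $q^{2}+q$ points into $q+1$ classes of size $q$. Hence, writing $n_{i}$ for the number of points of $S$ on the $i$-th line through $Q$, the weighted secant count of Definition~\ref{def2_mu_satur} is exactly $\sum_{i=1}^{q+1}\binom{n_{i}}{2}$, and the joint distribution of $(n_{1},\dots,n_{q+1})$ is the same for every $Q$. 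This reduces the whole problem to a single per-point tail estimate multiplied by $q^{2}+q+1$.

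Next I would use the elementary reduction that $Q$ is $(1,\mu)$-saturated as soon as at least $\mu$ of these $q+1$ lines are secant (carry at least two points of $S$), since each such line contributes at least $\binom{2}{2}=1$ to $\sum_{i}\binom{n_{i}}{2}$. Consequently the bad event $\{\sum_{i}\binom{n_{i}}{2}\le\mu-1\}$ is contained in $\{$at most $\mu-1$ secant lines through $Q\}$, whose probability I can bound explicitly. For $\mu=1$ this is precisely the count behind Theorem~\ref{th1}: the probability that no line through $Q$ is secant equals $\binom{q+1}{k}q^{k}/\binom{q^{2}+q}{k}\approx e^{-k^{2}/(2q)}$. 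For general $\mu$ one obtains the same factor times a correction counting the ways to create up to $\mu-1$ collisions, of the shape $e^{-\lambda}\sum_{j=0}^{\mu-1}\lambda^{j}/j!$ with $\lambda\approx k^{2}/(2q)$.

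Finally I would set $k=2D\sqrt{(q+1)\ln(q+1)}+2$, so that $\lambda\approx 2D^{2}\ln(q+1)$, and impose $(q^{2}+q+1)\,e^{-\lambda}\lambda^{\mu-1}/(\mu-1)!<1$. Taking logarithms turns this into an explicit inequality of the form $2(D^{2}-1)\ln(q+1)>(\mu-1)\ln\lambda-\ln((\mu-1)!)$, which holds with room to spare once $D\ge D_{\mu}$ on the stated ranges of $q$. The clean value $D_{\mu}=\mu+1$ (for $\mu\le\sqrt{q}$) simply absorbs the polynomial correction $\lambda^{\mu-1}/(\mu-1)!$; the sharper constants $2.4,2.6,2.8$ for $\mu=2,3,4$ come from optimizing the same inequality numerically for those fixed $\mu$ from the indicated thresholds on $q$; and the bound $2\mu-1$, valid up to $\mu\approx q/2$, is obtained from a separate estimate of the per-line secant probability valid when $k$ is comparable to $q$, where the small-density approximation used for $\mu\le\sqrt{q}$ is no longer available.

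The hard part is producing these tail bounds \emph{explicitly} — valid from the stated concrete thresholds on $q$ rather than merely asymptotically — and, above all, covering the full range of $\mu$ with one scheme. A single estimate cannot be simultaneously tight for small $\mu$, valid up to $\mu\sim\sqrt{q}$, and still usable up to $\mu\sim q/2$; this is exactly why $D_{\mu}$ is presented as a case table. A secondary subtlety is that replacing $\{\sum_{i}\binom{n_{i}}{2}\le\mu-1\}$ by $\{$at most $\mu-1$ secant lines$\}$ is only an inclusion: adequate for an upper bound on $s_{\mu}(2,q)$ but lossy, since recovering the true order would also require controlling lines meeting $S$ in three or more points.
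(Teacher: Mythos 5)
For $\mu=2,3,4$ your scheme coincides in essence with the paper's proof (Theorem~\ref{th4}): one uniformly random $(w+1)$-set, a union bound over the $q^2+q+1$ points, an explicit count of the low-coverage configurations through a fixed point, and computer-certified constants $2.4,2.6,2.8$ from the stated thresholds on $q$. The one real difference there is that the paper works with the exact multiplicity-weighted coverage, via the counts $T_0,\dots,T_3$ of $(w+1)$-sets covering $A$ exactly $i$ times; this forces it to include the trisecant term in $T_3$, since a $3$-secant counts $\binom{3}{2}=3$. Your coarser event $\{\text{at most }\mu-1\text{ secant lines}\}$ does contain the bad event (the inclusion goes the right way, so the bound is valid) and sidesteps that bookkeeping, but it is lossier — e.g.\ for $\mu=4$ it retains a lone $4$-secant (multiplicity $6$, hence actually a success) as a failure — so your numerics would have to be redone and are not guaranteed to reproduce $2.4,2.6,2.8$ at the same thresholds, though asymptotically the slack is negligible.

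The genuine divergence, and the genuine gap, concern the rows $D_\mu\le\mu+1$ ($\mu\le\sqrt q$, $q\ge4$) and $D_\mu\le 2\mu-1$ ($\mu\lesssim q/2$, $q\ge3$) of (\ref{eq1_Dmu}). The paper does not prove these with a single random set at all: Lemma~\ref{lem_extention_mu-1} and Corollary~\ref{cor3} proceed by induction on $\mu$, adjoining to a $(1,\mu-1)$-saturating set a \emph{disjoint} random ordinary saturating set, so that the only tail estimate ever needed is the $\mu=1$ computation of Section~\ref{sec_1-satur}, lightly perturbed by the disjointness (the $k(w+1)/2q(q+1)$ term); the recursion $D_i=D_{i-1}+1+(D_{i-1}+\delta)/q+\delta$ then delivers both rows by elementary induction, valid already at $q=4$ resp.\ $q=3$. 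Your route instead requires an explicit, non-asymptotic upper bound on $\mathrm{Prob}(\text{at most }\mu-1\text{ secant lines through }Q)$ uniformly for $\mu$ up to $\sqrt q$ and beyond: the Poisson shape $e^{-\lambda}\sum_{j=0}^{\mu-1}\lambda^j/j!$ with $\lambda\approx k^2/(2q)$ is asserted, not derived, and within your event the configurations in which some of the $j$ secant lines carry three or more points of $S$ are not controlled — for growing $\mu$ these corrections involve sums over all distributions of up to $w+1$ points among $j$ heavy lines, and taming them with constants valid at $q\ge4$ is substantive work, not absorption "with room to spare." The $2\mu-1$ regime is dispatched in a single sentence with no actual estimate. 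Since, as you yourself note, these explicit tails are "the hard part," what is missing is precisely the content of the theorem beyond Theorem~\ref{th1}; the paper's inductive device exists exactly to avoid proving them. So your architecture is credible for fixed small $\mu$, but as written it does not establish rows 4 and 5 of the table, nor the concrete thresholds anywhere.
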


The $\mu $-\emph{length function} $\ell _{\mu }(R,r,q)$ denotes
the smallest length of a linear $q$-ary $(R,\mu )$-MCF code
with covering radius $R$ and codimension~$r$
\cite{BDGMP-AMC2015,BDGMP-arXiv2015,Giul2013Survey,PDBGM-ExtendAbstr}.
For $ \mu =1,$ $\ell _{1}(R,r,q)$ is the usual length function
$\ell (R,r,q)$ for 1-fold coverings. In the \emph{covering code
language,} Theorem~\ref{th3} can be read as follows.

\begin{corollary}
Let $D_{\mu }$ be as in \eqref{eq1_Dmu}. The following upper
bound on the $ \mu $-length function holds.
\begin{equation}
\ell _{\mu }(2,3,q)\leq 2D_{\mu }\sqrt{(q+1)\ln (q+1)}+2\thicksim 2D_{\mu }
\sqrt{q\ln q}.  \label{eq1_ell_mu_Dmu}
\end{equation}
\end{corollary}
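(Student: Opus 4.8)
The plan is to obtain this corollary as a direct reformulation of Theorem~\ref{th3} through the standard dictionary between $(1,\mu)$-saturating sets in $PG(2,q)$ and $(2,\mu)$-MCF codes of codimension~$3$, which was recalled just before Theorem~\ref{th3}. First I would fix a $(1,\mu)$-saturating set $S\subset PG(2,q)$ of size $k$ and let $H$ be the $3\times k$ matrix whose columns are homogeneous coordinate vectors of the points of $S$. Reading $H$ as a parity-check matrix yields a $q$-ary linear code $C$ of length $k$ and codimension $3$; the task is to verify that $C$ has covering radius exactly $2$ and is $(2,\mu)$-MCF, so that $\ell_\mu(2,3,q)$ is governed by the smallest achievable $k$.

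The central step is the bookkeeping that matches geometric secant multiplicities with syndrome representations. Each nonzero syndrome $\sigma\in GF(q)^3$ represents a projective point $Q$; the support $\{i,j\}$ of a weight-$2$ representation $\sigma=a\,h_i+b\,h_j$ (with $a,b\in GF(q)^{*}$, since $Q\notin S$ forces both coefficients nonzero and $h_i,h_j$ independent forces them unique) picks out two points of $S$ collinear with $Q$, i.e. a secant $\ell$ through $Q$ and a pair of its points. Summing over all secants through $Q$ shows that the number of weight-$2$ representations of $\sigma$ equals $\sum_{\ell\ni Q}\binom{\#(\ell\cap S)}{2}$, which is exactly the number of secants of $S$ through $Q$ counted with the multiplicity of Definition~\ref{def2_mu_satur}. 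Since a vector is farthest-off (distance $2$) precisely when its syndrome represents a point $Q\notin S$, the $(1,\mu)$-saturating condition is verbatim the requirement that every farthest-off vector have at least $\mu$ nearest codewords, i.e. that $C$ be $(2,\mu)$-MCF; this accounting also exhibits a weight-$\le 2$ representation for every syndrome, so the covering radius is at most $2$ (and equals $2$ for the relevant sizes $S\neq PG(2,q)$).

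With the correspondence established, the length function and the saturating parameter coincide over the Desarguesian plane: $\ell_\mu(2,3,q)$ is the smallest length of such a $(2,\mu)$-MCF code, which is exactly the smallest size of a $(1,\mu)$-saturating set in $PG(2,q)$. Because the bound of Theorem~\ref{th3} is stated for \emph{every} projective plane of order $q$, it applies in particular to $PG(2,q)$, giving $\ell_\mu(2,3,q)\le 2D_\mu\sqrt{(q+1)\ln(q+1)}+2$ with $D_\mu$ as in~\eqref{eq1_Dmu}. I expect no substantive obstacle: the result is a translation rather than a new estimate, and the only point demanding care is the exact multiplicity count, ensuring that the combinatorial weight $\binom{\#(\ell\cap S)}{2}$ on the geometric side agrees with the number of weight-$2$ syndrome representations on the coding side, so that the parameter $\mu$ transfers with no loss.
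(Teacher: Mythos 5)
Your proposal is correct and follows essentially the same route as the paper, which treats the corollary as an immediate translation of Theorem~\ref{th3} via the correspondence (recalled just before the theorem) between $(1,\mu)$-saturating sets of size $k$ in $PG(2,q)$ and parity-check matrices of $[k,k-3]_q$ $(2,\mu)$-MCF codes, applied to the particular plane $PG(2,q)$. Your explicit verification that the number of weight-$2$ syndrome representations of a point $Q\notin S$ equals $\sum_{\ell\ni Q}\binom{\#(\ell\cap S)}{2}$ is exactly the bookkeeping the paper leaves implicit, so there is nothing to add.
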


In \cite[Prop.\thinspace 5.2]{BDGMP-AMC2015} the following
upper bounds on the $\mu$-length function were obtained by
adapting the probabilistic approach in \cite{BorSzTic,Kovacs}:
\begin{equation}
s_{\mu }(2,q)\leq \ell _{\mu }(2,3,q)<66\sqrt{\mu q\ln q}\text{ for }\mu
<121q\ln q.  \label{eq1_bound66}
\end{equation}

The bounds (\ref{eq1_mu_satsetsize}) and (\ref{eq1_ell_mu_Dmu})
improve (\ref{eq1_bound66}) provided that $D_{\mu }<33\sqrt{\mu
}.$ This actually happens for a wide region of $\mu ,$ see
(\ref{eq1_Dmu}).

Let $PG(N,q)$ be the $N$-dimensional projective space over the
Galois field of $q$ elements.

From (\ref{eq1_satsetsize}) and (\ref {eq1_mu_satsetsize}), by
using inductive constructions from \cite
{BDGMP-AMC2015,DavCovRad2,DGMP-AMC}, upper bounds on the
smallest size of a saturating set in the $N$-dimensional
projective space $PG(N,q)$ can be obtained; see Section
\ref{sec_space}. In many cases these bounds are better than the
known ones.

The paper is organized as follows. In Section
\ref{sec_1-satur}, we deal with upper bounds on the smallest
size of a saturating set in a projective plane; Theorems
\ref{th1} and \ref{th2} are proved using probabilistic methods. In
Sections \ref{sec_mu-satur} and \ref{sec_mu-satur improv} we
apply our probabilistic approach to $(1,\mu)$-saturating sets in a
projective plane. Finally, in Section \ref{sec_space}, bounds
for saturating and $(1,\mu)$-saturating sets in the projective
space $PG(N,q)$ are obtained.

\section{Upper bound on the smallest size of a saturating set in a
projective plane\label{sec_1-satur}}

Let $w>0$ be a fixed integer. Consider a random $(w+1)$-point
subset $\mathcal{K} _{w+1}$ of $\Pi _{q}.$ The total number of
such subsets is $\binom{q^{2}+q+1 }{w+1}.$ A fixed point $A$ of
$\Pi _{q}$ is \emph{covered} by $\mathcal{K} _{w+1}$ if it
belongs to an $r$-secant of $\mathcal{K}_{w+1}$ with $r\geq 2.$
We denote by $\mathrm{Prob}(\diamond )$ the probability of some
event~$ \diamond $.

We estimate
\begin{equation*}
\pi :={\mathrm{Prob}}(A\text{ not covered by }\mathcal{K}_{w+1})
\end{equation*}
as the ratio of the number of $(w+1)$-point subsets not
covering $A$ over the total number of subsets of size $(w+1)$.
Since a set $\mathcal{K}_{w+1}$ does not cover $A$ if and only
if every line through $A$ contains at most one point of
$\mathcal{K}_{w+1}$, we have
\begin{equation}
\pi =\frac{q^{w+1}\binom{q+1}{w+1}}{\binom{q^{2}+q+1}{w+1}}.
\label{eq2_pi=_start}
\end{equation}

By straightforward calculations,
\begin{equation}
\pi =\frac{(q^{2}+q)(q^{2})\cdots (q^{2}+q-iq)\cdots (q^{2}+q-wq)}{
(q^{2}+q+1)(q^{2}+q)\cdots (q^{2}+q-i)\cdots (q^{2}+q+1-w)}=
\label{eq2_calcul}
\end{equation}
\begin{equation*}
=\prod_{i=0}^{w}\frac{q^{2}+q-iq}{q^{2}+q+1-i}=\prod_{i=0}^{w}\left( 1-\frac{
iq-i+1}{q^{2}+q+1-i}\right) <\prod_{i=0}^{w}\left( 1-\frac{i(q-1)}{q^{2}+q+1}
\right) .
\end{equation*}

Using the inequality $1-x\leq e^{-x}$ we obtain that
\begin{equation*}
\pi <e^{-\sum_{i=0}^{w}\frac{i(q-1)}{q^{2}+q+1}}=e^{-\frac{(w^{2}+w)(q-1)}{
2(q^{2}+q+1)}},
\end{equation*}
which implies
\begin{equation}
\pi <e^{-\frac{(w^{2}+w)(q-1)}{2(q^{2}+q+1)}}<e^{-\frac{w^{2}}{2q+2}},
\label{eq2_pi_estimate}
\end{equation}
provided that
\begin{equation*}
\frac{(w+1)(q^{2}-1)}{(q^{2}+q+1)}>w
\end{equation*}
that is
\begin{equation*}
w<\frac{q^{2}-1}{q+2}\sim q.
\end{equation*}

The set $\mathcal{K}_{w+1}$ is not saturating if at least one
point $A\in \Pi _{q}$ is not covered by $\mathcal{K}_{w+1}.$
Similarly to \cite[Proposition 4.1]{BorSzTic}, we note that
\begin{equation}
\mathrm{Prob}\left( \mathcal{K}_{w+1}\text{ is not saturating}\right)
\leq \sum\limits_{A\in \Pi _{q}}\mathrm{Prob}(A\text{ is not covered}).
\label{eq2_Kw+1_not_satur}
\end{equation}
Now, using \eqref{eq2_pi_estimate}, we obtain that
\begin{equation}
\mathrm{Prob}\left( \mathcal{K}_{w+1}\text{ is not saturating}\right)
\leq (q^{2}+q+1)\pi <(q+1)^{2}e^{-\frac{w^{2}}{2q+2}}.
\label{eq2_Kw+1_not_satur_2}
\end{equation}

Therefore, the probability that all the points of $\Pi _{q}$ are covered is
\begin{equation}
\mathrm{Prob}\left( \mathcal{K}_{w+1}\text{ is saturating}\right)
>1-(q+1)^{2}e^{-\frac{w^{2}}{2q+2}}.  \label{eq2_A_covered}
\end{equation}
This quantity is larger than $0$ taking, for instance,
\begin{equation*}
w=\left\lceil \sqrt{(2q+2)\ln \left( (q+1)^{2}\right) }\right\rceil .
\end{equation*}
This shows that in $\Pi _{q}$ there exists a saturating set
with size
\begin{equation*}
k\leq 2\sqrt{(q+1)\ln (q+1)}+2\thicksim 2\sqrt{q\ln q}.
\end{equation*}
and therefore Theorem \ref{th1} is proved.

In conclusion, we note that any value
\begin{equation*}
w=\left\lceil c\sqrt{(2q+2)\ln \left( (q+1)^{2}\right) }\right\rceil <\frac{
q^{2}-1}{q+2}
\end{equation*}
where the parameter $c\geq 1$ is independent of $q$, provides in
\eqref{eq2_A_covered} a positive probability greater than $1-1/(q+1)^{2c^{2}-2}$; therefore Theorem~\ref{th2}
holds.

It is worth noting that in \eqref{eq1_bigprob} for $q$ large
enough, choosing $c=1+\varepsilon $, with $\varepsilon
=o(1)>0$, the probability is close to 1.

\section{Upper bounds on the smallest size of a $(1,\mu) $-satura\-ting
set in a projective plane, $\mu \geq 2$\label{sec_mu-satur}}

For $\mu \geq 2,$ we construct a $(1,\mu) $-saturating set
$\mathcal{S}_{\mu }$ in $\Pi _{q}$ by joining a $(1,\mu
-1)$-saturating set $\mathcal{S} _{\mu -1}$ and a
\textquotedblleft usual\textquotedblright $\ $saturating set
\emph{disjoint} from $\mathcal{S}_{\mu -1}$.

Let $w>0$ be a fixed integer; we consider a random $(w+1)$-point
subset $\mathcal{H} _{w+1}$ of $\Pi _{q}$ disjoint from
$\mathcal{S}_{\mu -1}$. Let $k$ denote the size of $\mathcal
S_{\mu-1}$. Then the total number of such subsets is
$\binom{q^{2}+q+1-k}{w+1}.$

Clearly, if $\mathcal{H}_{w+1}$ is a saturating set then
$\mathcal{S}_{\mu -1}\cup \mathcal{H}_{w+1}$ is a $(1,\mu)
$-saturating set.

We argue as in Section \ref{sec_1-satur}. For a fixed point $A$
of $\Pi _{q}$ we estimate
\begin{equation*}
\lambda :={\mathrm{Prob}}(A\text{ not covered by }\mathcal{H}_{w+1})
\end{equation*}
as the ratio of the number of $(w+1)$-point subsets not
covering $A$ and disjoint from $\mathcal{S}_{\mu -1}$ over the
total number of subsets of size $(w+1)$ disjoint from
$\mathcal{S}_{\mu -1}$. Similarly to (\ref{eq2_pi=_start}), we
have
\begin{equation}
\lambda <\frac{q^{w+1}\binom{q+1}{w+1}}{\binom{q^{2}+q+1-k}{w+1}}.
\label{eq3_lambda_main}
\end{equation}
In fact, the number of $(w+1)$-point subsets not covering $A$
and disjoint from $ \mathcal{S}_{\mu -1}$ is smaller than the
numerator of (\ref{eq3_lambda_main}).

By straightforward calculations similar to (\ref{eq2_calcul}),
\begin{align*}
&\lambda <\frac{(q^{2}+q)(q^{2})\cdots (q^{2}+q-wq)}{(q^{2}+q+1-k)(q^{2}+q-k)
\cdots (q^{2}+q+1-w-k)}<
\prod_{i=0}^{w}\left( 1-\frac{i(q-1)-k}{q^{2}+q+1}
\right) .
\end{align*}
Now, under the condition $w<\frac{q^{2}-1}{q+2}\sim q,$ as in
(\ref{eq2_pi_estimate}) we obtain that
\begin{equation*}
\lambda <e^{-\sum_{i=0}^{w}\frac{i(q-1)-k}{q^{2}+q+1}}<e^{-\frac{
(w^{2}+w)(q-1)}{2(q^{2}+q+1)}+\frac{k(w+1)}{2q(q+1)}}<e^{-\frac{w^{2}}{2q+2}+
\frac{k(w+1)}{2q(q+1)}}.
\end{equation*}
This implies that
\begin{equation*}
\mathrm{Prob}\left( \mathcal{H}_{w+1}\text{ is not saturating}\right)
\leq (q^{2}+q+1)\lambda <(q+1)^{2}e^{-\frac{w^{2}}{2q+2}+\frac{k(w+1)}{
2q(q+1)}}.
\end{equation*}
So,
\begin{equation}
\mathrm{Prob}\left( \mathcal{S}_{\mu -1}\cup \mathcal{H}_{w+1}\text{ is }(1,\mu)
\text{-saturating}\right) >1-(q+1)^{2}e^{-\frac{w^{2}}{2q+2}+\frac{k(w+1)
}{2q(q+1)}}.  \label{eq3_Tw+1 satur}
\end{equation}

Throughout this section, $\delta $ is as in (\ref{eq1_delta}).
We represent $ w$ and $k$ in the following form:
\begin{equation}
w=\left\lceil d\sqrt{(2q+2)\ln (\left( q+1\right) ^{2})}\right\rceil ,\text{
}d>1\text{ independent of }q;  \label{w =}
\end{equation}
\begin{equation*}
k\leq 2D\sqrt{(q+1)\ln \left( q+1\right) }+2,\text{ }D\geq 1\text{
independent of }q.
\end{equation*}
Then the following holds:
\begin{equation}
w+1\leq 2(d+\delta )\sqrt{(q+1)\ln \left( q+1\right) };  \label{w+1<=}
\end{equation}
\begin{equation}
k\leq 2(D+\delta )\sqrt{(q+1)\ln \left( q+1\right) };  \label{k<=}
\end{equation}
\begin{equation*}
(q+1)^{2}e^{-\frac{w^{2}}{2q+2}+\frac{k(w+1)}{2q(q+1)}}<\frac{
(q+1)^{2(D+\delta )(d+\delta )/q}}{(q+1)^{2d^{2}-2}}=(q+1)^{\frac{2}{q}
(D+\delta )(d+\delta )-2(d^{2}-1)}.
\end{equation*}
Let
\begin{equation}
d=1+\frac{D+\delta }{q}.  \label{d>=}
\end{equation}
Then
\begin{align*}
&d-1=\frac{D+\delta }{q}>\frac{2(D+\delta )(d+\delta )}{2q(d+1)};\\
&\frac{2}{q}(D+\delta )(d+\delta )-2(d^{2}-1) <0; \\
&(q+1)^{2}e^{-\frac{w^{2}}{2q+2}+\frac{k(w+1)}{2q(q+1)}} <1.
\end{align*}
The last inequality means that the probability in (\ref{eq3_Tw+1 satur}) is
positive. As $\#(\mathcal{S}_{\mu -1}\cup \mathcal{H}_{w+1})=k+w+1,$ taking
into account (\ref{eq1_delta}), (\ref{w+1<=})--(\ref{d>=}), we have proved
the following lemma.

\begin{lemma}
\label{lem_extention_mu-1} Let $\Pi _{q}$ be a projective plane
 of order $q$. Let $\mu \geq 2$
and assume that for some $D\ge 1$ in $\Pi _{q}$ there exists a
$(1,\mu -1)$-saturating set with size $k\leq 2D\sqrt{(q+1)\ln
(q+1)}+2$. Then in $\Pi _{q}$ there exists a $(1,\mu)$-saturating
set with size
\begin{equation}
v\leq 2\left( D+1+\frac{D+\delta }{q}+\delta \right) \sqrt{(q+1)\ln \left(
q+1\right) }+2.  \label{main}
\end{equation}
\end{lemma}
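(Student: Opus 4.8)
The plan is to realize the desired $(1,\mu)$-saturating set as a disjoint union $\mathcal{S}_{\mu-1}\cup\mathcal{H}_{w+1}$, where $\mathcal{S}_{\mu-1}$ is the given $(1,\mu-1)$-saturating set of size $k$ and $\mathcal{H}_{w+1}$ is an ordinary saturating set chosen disjoint from it. The structural fact I would record first is why this suffices. For a point $Q\notin\mathcal{S}_{\mu-1}\cup\mathcal{H}_{w+1}$, each line $\ell$ through $Q$ meets the two sets in disjoint point sets, so $\binom{\#(\ell\cap(\mathcal{S}_{\mu-1}\cup\mathcal{H}_{w+1}))}{2}\ge\binom{\#(\ell\cap\mathcal{S}_{\mu-1})}{2}+\binom{\#(\ell\cap\mathcal{H}_{w+1})}{2}$ by superadditivity of the binomial. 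Summing over all lines through $Q$, the multiplicity contributed by $\mathcal{S}_{\mu-1}$ is at least $\mu-1$; and if $\mathcal{H}_{w+1}$ is saturating then $Q$ lies on at least one secant of $\mathcal{H}_{w+1}$, contributing at least $1$ more. Hence the total multiplicity is at least $\mu$, so $\mathcal{S}_{\mu-1}\cup\mathcal{H}_{w+1}$ is indeed $(1,\mu)$-saturating.

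Next I would establish the existence of a suitable disjoint saturating set $\mathcal{H}_{w+1}$ via the probabilistic estimate already prepared in this section. Drawing $\mathcal{H}_{w+1}$ uniformly at random among the $(w+1)$-subsets of $\Pi_{q}\setminus\mathcal{S}_{\mu-1}$, the only change from the argument of Section~\ref{sec_1-satur} is that the denominator $\binom{q^{2}+q+1}{w+1}$ is replaced by $\binom{q^{2}+q+1-k}{w+1}$; this is precisely what produces the extra summand $\frac{k(w+1)}{2q(q+1)}$ in the exponent and yields the failure bound (\ref{eq3_Tw+1 satur}). It then remains to choose $w$ so that the right-hand side of (\ref{eq3_Tw+1 satur}) is positive, equivalently so that $(q+1)^{2}e^{-w^{2}/(2q+2)+k(w+1)/(2q(q+1))}<1$.

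The decisive step is the parameter choice $w=\lceil d\sqrt{(2q+2)\ln((q+1)^{2})}\rceil$ with $d=1+\frac{D+\delta}{q}$ as in (\ref{w =}), (\ref{d>=}). Feeding the bounds $w+1\le 2(d+\delta)\sqrt{(q+1)\ln(q+1)}$ and $k\le 2(D+\delta)\sqrt{(q+1)\ln(q+1)}$ from (\ref{w+1<=}), (\ref{k<=}), together with the lower estimate for $w$, turns the failure bound into $(q+1)^{\frac{2}{q}(D+\delta)(d+\delta)-2(d^{2}-1)}$. The point I expect to be the crux is verifying that this exponent is negative. Since $\delta<1$ by (\ref{eq1_delta}) one has $\frac{d+\delta}{d+1}<1$, whence $d-1=\frac{D+\delta}{q}>\frac{(D+\delta)(d+\delta)}{q(d+1)}$; multiplying through by $d+1$ gives $d^{2}-1>\frac{(D+\delta)(d+\delta)}{q}$, so $\frac{2}{q}(D+\delta)(d+\delta)-2(d^{2}-1)<0$ and the failure probability is below $1$. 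This is the only place where the minimal-above-$1$ choice of $d$ is used: $d$ must exceed $1$ by enough to absorb the positive disjointness correction, yet stay close enough to $1$ that $w+1$ still contributes only about $2\sqrt{(q+1)\ln(q+1)}$ to the size.

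Finally I would read off the size. The union has $v=k+(w+1)$ points; keeping $k\le 2D\sqrt{(q+1)\ln(q+1)}+2$ in its original form and bounding $w+1$ via (\ref{w+1<=}) gives $v\le 2(D+d+\delta)\sqrt{(q+1)\ln(q+1)}+2$, and substituting $d=1+\frac{D+\delta}{q}$ yields exactly (\ref{main}). Thus the genuine obstacle is the sign check of the exponent via $\delta<1$; the remaining steps parallel the proof of Theorem~\ref{th1} and amount to bookkeeping.
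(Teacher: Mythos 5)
Your proposal is correct and follows essentially the same route as the paper: the disjoint union $\mathcal{S}_{\mu-1}\cup\mathcal{H}_{w+1}$, the modified denominator $\binom{q^{2}+q+1-k}{w+1}$ producing the correction term $\frac{k(w+1)}{2q(q+1)}$ in the exponent, the choice $d=1+\frac{D+\delta}{q}$, the sign check of the exponent via $\delta<1$, and the final bookkeeping $v=k+w+1$. Your explicit justification of the step the paper dismisses with ``Clearly'' --- superadditivity $\binom{a+b}{2}\ge\binom{a}{2}+\binom{b}{2}$ applied line by line through $Q$, using disjointness of the two sets --- is a welcome elaboration but not a different argument.
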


\begin{corollary}
\label{cor3}Let $\mu \geq 2.$

\begin{description}
\item[(i)] In $\Pi _{q}$ there is a $(1,\mu) $-saturating set
    with size
\begin{equation*}
k\leq 2D_{\mu }\sqrt{(q+1)\ln (q+1)}+2,
\end{equation*}
where
\begin{equation}
D_{1}=1,\quad D_{i}=D_{i-1}+1+\frac{D_{i-1}+\delta }{q}+\delta ,\text{ }
i=2,3,\ldots \mu .  \label{Di}
\end{equation}

\item[(ii)] In $\Pi _{q}$ there is a $(1,\mu) $-saturating set
    with size
\begin{equation*}
k\leq 2(\mu +1)\sqrt{(q+1)\ln (q+1)}+2,\quad \mu \leq \sqrt{q}.
\end{equation*}

\item[(iii)] In $\Pi _{q}$ there is a $(1,\mu) $-saturating set
    with size
\begin{equation*}
k\leq 2(2\mu -1)\sqrt{(q+1)\ln (q+1)}+2
\end{equation*}
provided that
\begin{equation*}
\mu \leq \frac{(1-\delta )q-\delta +1}{2}+1.
\end{equation*}
\end{description}
\end{corollary}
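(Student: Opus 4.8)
The plan is to derive all three parts from Lemma~\ref{lem_extention_mu-1}, using Theorem~\ref{th1} as the base case. For part~(i) I would argue by induction on $\mu$. Theorem~\ref{th1} produces a saturating set, i.e.\ a $(1,1)$-saturating set, of size at most $2D_1\sqrt{(q+1)\ln(q+1)}+2$ with $D_1=1$, settling $\mu=1$. For the step, assuming a $(1,\mu-1)$-saturating set of size at most $2D_{\mu-1}\sqrt{(q+1)\ln(q+1)}+2$ with $D_{\mu-1}\ge 1$, I apply Lemma~\ref{lem_extention_mu-1} with $D=D_{\mu-1}$; its conclusion \eqref{main} is exactly the asserted bound with $D_\mu=D_{\mu-1}+1+\frac{D_{\mu-1}+\delta}{q}+\delta$, which is the recurrence \eqref{Di}. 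The only hypothesis to verify is $D_{\mu-1}\ge 1$, and this is immediate since $D_1=1$ and \eqref{Di} is strictly increasing, so $D_i\ge 1$ for every $i$.

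For parts~(ii) and (iii) the task reduces to bounding the solution of the linear recurrence \eqref{Di}. I would first solve it in closed form: writing \eqref{Di} as $D_i=(1+\frac1q)D_{i-1}+(1+\delta+\frac{\delta}{q})$ and subtracting the fixed point, one obtains
\begin{equation*}
D_\mu=1+(q+1)(1+\delta)\left[\left(1+\tfrac1q\right)^{\mu-1}-1\right].
\end{equation*}
For part~(iii) a direct induction with the invariant $D_j\le 2j-1$ is cleanest: substituting $D_{i-1}\le 2i-3$ into \eqref{Di}, the step $D_i\le 2i-1$ reduces to $2i-3+\delta\le(1-\delta)q$, and requiring this up to $i=\mu$ is precisely the stated hypothesis $\mu\le\frac12\big((1-\delta)q-\delta+1\big)+1$. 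Thus the condition on $\mu$ in (iii) is exactly what makes the induction close.

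For part~(ii) I would bound the closed form under $\mu\le\sqrt q$. Using $\left(1+\tfrac1q\right)^{\mu-1}-1=\tfrac1q\sum_{j=0}^{\mu-2}\left(1+\tfrac1q\right)^{j}\le\frac{\mu-1}{q}\left(1+\tfrac1q\right)^{\mu-1}$ together with $\left(1+\tfrac1q\right)^{\mu-1}\le e^{(\mu-1)/q}\le e^{1/\sqrt q}$, the target $D_\mu\le\mu+1$ reduces to $(1+\delta)e^{1/\sqrt q}\le 1+\frac{1}{\mu-1}$, which I would check for $2\le\mu\le\sqrt q$ and $q\ge 4$ (the case $\mu=1$ being trivial).

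The main obstacle is part~(ii), which is genuinely tight. Each application of the lemma adds exactly $1$ plus a positive correction of order $\frac{D_{\mu-1}+\delta}{q}+\delta$, and after $\mu-1\sim\sqrt q$ steps these corrections accumulate to a quantity very close to $1$; in particular a naive term-by-term induction $D_j\le j+1$ fails to close. I therefore expect to need the exact closed form (or a careful summation) together with the precise ranges $\mu\le\sqrt q$ and $q\ge 4$ to show the accumulated correction stays strictly below the threshold, and the small quantity $\delta$ of \eqref{eq1_delta} must be tracked throughout rather than discarded.
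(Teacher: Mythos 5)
Your parts (i) and (iii) are correct and essentially identical to the paper's proof: (i) is the same induction seeded by Theorem~\ref{th1} with $D_1=1$ and driven by \eqref{main}, and (iii) is the same single-step induction with invariant $D_j\le 2j-1$, whose closing condition reproduces exactly the stated range of $\mu$. Your closed form $D_\mu=1+(q+1)(1+\delta)\bigl[\bigl(1+\tfrac1q\bigr)^{\mu-1}-1\bigr]$ is also correct.

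The genuine gap is in part (ii): the inequality you propose to ``check,'' namely $(1+\delta)e^{1/\sqrt q}\le 1+\frac{1}{\mu-1}$, is simply false in the critical range $\mu=\lfloor\sqrt q\rfloor$. At $q=4$, $\mu=2$ one has $\delta\approx 0.3525$ and $(1+\delta)e^{1/2}\approx 2.23>2$; at $q=9$, $\mu=3$ the left side is $\approx 1.69>1.5$; and for all large $q$ it fails at $\mu=\lfloor\sqrt q\rfloor$, since $(1+\delta)e^{1/\sqrt q}\approx 1+\frac{1}{\sqrt q}+\delta+\frac{1}{2q}$ while $1+\frac{1}{\sqrt q-1}\approx 1+\frac{1}{\sqrt q}+\frac1q$, and $\delta=\frac{1}{\sqrt{(q+1)\ln(q+1)}}\gg\frac1{2q}$. (Your reduction also silently drops a factor $\frac{q+1}{q}$, which only makes matters worse.) The source of the failure is the step $\frac1q\sum_{j=0}^{\mu-2}\bigl(1+\tfrac1q\bigr)^{j}\le\frac{\mu-1}{q}\bigl(1+\tfrac1q\bigr)^{\mu-1}$: replacing the average geometric term by (more than) the maximum inflates $D_\mu-1$ by an additive $\approx\frac{(\mu-1)^2}{2q}\approx\frac12$ when $\mu\sim\sqrt q$, whereas the true margin in $D_\mu\le\mu+1$ at $\mu=\lfloor\sqrt q\rfloor$ is only about $0.1$ numerically (e.g.\ $q=100$: $D_{10}\approx 10.90$ against the target $11$, while your chain gives $\approx 11.33$). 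Any max-term bound on the geometric sum therefore overshoots; the second-order term of the sum must be kept honestly.

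The paper closes (ii) differently, and your side remark that ``a naive term-by-term induction $D_j\le j+1$ fails'' is only half right. Single-step propagation of $D_{i-1}\le i$ indeed cannot work, but the paper runs a strong induction on the cumulative form \eqref{Di=i+}: writing $D_i=i+A_i$ with $A_i=\frac{\sum_{j=1}^{i-1}D_j+(i-1)\delta}{q}+(i-1)\delta$, it assumes $D_j\le j+1$ for all $j\le h$ and bounds the whole accumulated correction at once, $A_{h+1}\le\frac{h^2+3h-2+2h\delta}{2q}+h\delta$, which for $h\le\sqrt q-1$ is roughly $\frac12+\frac{1}{2\sqrt q}+\frac{1}{\sqrt{\ln(q+1)}}<1$. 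The exact coefficient $\frac{1}{2q}$ on the quadratic term (coming from summing $\sum_j(j+1)$ exactly rather than bounding termwise) is precisely what your estimate loses. So the correct repair of your plan is not a finer exponential bound but either the paper's cumulative strong induction or a two-term expansion of your closed form, e.g.\ $\bigl(1+\tfrac1q\bigr)^{m}-1\le\frac mq+\frac{m^2}{2q^2}e^{m/q}$, tracked together with $\delta$.
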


\begin{proof}

\begin{description}
\item[(i)] For $\mu =1,$ a $(1,\mu)$-saturating set is an usual saturating set.
Therefore, we may use Theorem \ref{th1} and put $D_{1}=1$. Then we
iteratively apply (\ref{main}).

\item[(ii)] We use (i). By (\ref{Di}),
\begin{equation}
D_{i}=i+\frac{\sum_{j=1}^{i-1}D_{j}+(i-1)\delta }{q}+(i-1)\delta ,\text{ }
i=2,3,\ldots \mu .  \label{Di=i+}
\end{equation}
By induction, we will show that, under the condition $\mu
\leq \sqrt{q}$,
\begin{equation*}
A_{i}:=\frac{\sum_{j=1}^{i-1}D_{j}+(i-1)\delta }{q}+(i-1)\delta \leq 1
\end{equation*}
holds for $i=2,3,\ldots, \mu$. We have
\begin{equation*}
A_{2}=\frac{D_{1}+\delta }{q}+\delta =\frac{1+\delta }{q}+\delta <1.
\end{equation*}
Assume that $A_{i}\leq 1,$ $i=2,3,\ldots ,h,$ with $h\leq
\mu -1\leq \sqrt{q} -1.$ Then, by (\ref{Di}),
(\ref{Di=i+}), we have that
\begin{equation*}
D_{1}=1;\quad D_{i}\leq i+1,\text{ }i=2,3,\ldots ,h;
\end{equation*}
\begin{equation*}
A_{h+1}\leq \frac{1+\sum_{j=2}^{h}(j+1)+h\delta }{q}+h\delta =\frac{
h^{2}+3h-2+2h\delta }{2q}+h\delta .
\end{equation*}
It can be checked that $A_{h+1}<1$ if $h\leq \sqrt{q}-1.$ So, $D_{i}\leq
i+1, $ $i=2,3,\ldots \mu .$

\item[(iii)] By the proof of (ii), $D_{2}\leq 3$ holds.
    Assume that $D_{i}\leq 2i-1,$ $ i=2,3,\ldots ,h,$ with
    $h\leq \mu
-1\leq \frac{1}{2}((1-\delta )q-\delta +1)$ . Then
\begin{align*}
&D_{h+1} =D_{h}+1+\frac{D_{h}+\delta }{q}+\delta \leq 2h-1+1+\frac{
2h-1+\delta }{q}+\delta \leq \\
&2h+\frac{(1-\delta )q-\delta +1-1+\delta }{q}+\delta =
2h+(1-\delta)+\delta =2h+1.
\end{align*}
\end{description}
\end{proof}

\section{Improved upper bounds on the smallest size of a $(1,\mu)$-saturating set
 in a projective plane,
 $\mu =2,3,4$\label{sec_mu-satur improv}}

Let $w>0$ be a fixed integer. We consider a random $(w+1)$-point
subset $\mathcal{K} _{w+1}$ of $\Pi _{q}.$ The total number of
such subsets is $\binom{q^{2}+q+1 }{w+1}.$ As above, let $A$ be
a fixed point of $\Pi _{q}.$

We say that $\mathcal{K}_{w+1}$ covers $A$ \emph{exactly} $i$
times if the number of secants of $\mathcal{K}_{w+1}$ through
$A$ is \emph{exactly} $i$, counted with multiplicity. Denote by
$T_{i}$ the number of $(w+1)$-subsets covering $A$ exactly $i$
times, $i=0,1,2,\ldots ,$ where $i=0$ means that $A$ is not
covered by $\mathcal{K}_{w+1}$. Similarly to the numerator of
(\ref{eq2_pi=_start}) we have
\begin{equation}
T_{0}=q^{w+1}\binom{q+1}{w+1}.  \label{T0}
\end{equation}
According to
 Definition \ref{def2_mu_satur}, we say that a fixed point $A$
of $\Pi _{q}$ is $\mu $-\emph{covered} by $\mathcal{K}_{w+1}$ if the number
of secants of $\mathcal{K}_{w+1}$ through $A$ is at least $\mu $, counted
with multiplicity. We estimate
\begin{equation*}
\pi _{\mu }:={\mathrm{Prob}}(A\text{ not }\mu \text{-covered by }\mathcal{K}
_{w+1})
\end{equation*}
as the ratio of the number of $(w+1)$-point subsets that do not
$\mu $-cover $A$ over the total number of subsets of size
$(w+1)$. So,
\begin{equation}
\pi _{\mu }=\frac{\sum_{i=0}^{\mu -1}T_{i}}{\binom{q^{2}+q+1}{w+1}}
=R_{w,q}\sum\limits_{i=0}^{\mu -1}\frac{T_{i}}{T_{0}}  \label{pi_1,mu}
\end{equation}
where
\begin{equation}
R_{w,q}=\frac{T_{0}}{\binom{q^{2}+q+1}{w+1}}.  \label{Rwq}
\end{equation}

The set $\mathcal{K}_{w+1}$ is not $(1,\mu) $-saturating if at
least one point $ A\in \Pi _{q}$ is not $\mu $-covered by
$\mathcal{K}_{w+1}.$ As in (\ref{eq2_Kw+1_not_satur}) and
(\ref{eq2_Kw+1_not_satur_2}), we have
\begin{equation*}
\mathrm{Prob}\left( \mathcal{K}_{w+1}\text{ is not }(1,\mu) \text{-saturating}\right) \leq (q^{2}+q+1)\pi _{\mu }<(q+1)^{2}\pi _{\mu }.
\end{equation*}
Hence, the probability that all the points of $\Pi _{q}$ are $\mu $-covered
is
\begin{equation}
\mathrm{Prob}\left( \mathcal{K}_{w+1}\text{ is a }(1,\mu) \text{-saturating}
\right) \geq 1-(q^{2}+q+1)\pi _{\mu }>1-(q+1)^{2}\pi _{\mu }.
\label{K_w+1  satur}
\end{equation}

Throughout this section, we represent $w$ in the form (\ref{w
=}). Also, from now, we assume
\begin{equation}
w<\frac{q+1}{2}.  \label{w<(q+1)/2}
\end{equation}

\begin{theorem}
\label{th4} For the smallest size $s_{\mu }(2,q)$ of a $(1,\mu)
$-saturating set in a projective plane of order $q$ the
following upper bounds hold:
\begin{eqnarray}
s_{2}(2,q) &\leq &2.4\sqrt{(q+1)\ln (q+1)}+2\thicksim 2.4\sqrt{q\ln q},\quad
q\geq 97;  \label{eq4_mu=2_bound} \\
s_{3}(2,q) &\leq &2.6\sqrt{(q+1)\ln (q+1)}+2\thicksim 2.6\sqrt{q\ln q},\quad
q\geq 181;  \label{eq4_mu=3_bound} \\
s_{4}(2,q) &\leq &2.8\sqrt{(q+1)\ln (q+1)}+2\thicksim 2.8\sqrt{q\ln q},\quad
q\geq 125.  \label{eq4_mu=4_bound}
\end{eqnarray}
\end{theorem}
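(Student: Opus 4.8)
The plan is to estimate $\pi_\mu$ directly through the counts $T_i/T_0$ of \eqref{pi_1,mu}, rather than via the inductive construction of Section~\ref{sec_mu-satur}. The first step is to describe, for $i=1,2,3$, exactly which configurations of $\mathcal{K}_{w+1}$ cover the fixed point $A$ precisely $i$ times. Since a line meeting $\mathcal{K}_{w+1}$ in $m$ points contributes multiplicity $\binom{m}{2}$, total multiplicity $i$ forces a short list: for $i=1$ a single $2$-secant through $A$; for $i=2$ exactly two $2$-secants; for $i=3$ either three $2$-secants or one $3$-secant. In every case the remaining points must lie on pairwise distinct further lines through $A$, one point per line, contributing nothing. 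Counting the $q+1$ lines through $A$ and the $q$ points $\ne A$ on each then yields closed forms such as
\begin{equation*}
T_1=(q+1)\binom{q}{2}\binom{q}{w-1}q^{w-1},\qquad
T_2=\binom{q+1}{2}\binom{q}{2}^{2}\binom{q-1}{w-3}q^{w-3},
\end{equation*}
with $T_3$ the sum of the two analogous contributions.

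Next I would divide by $T_0=q^{w+1}\binom{q+1}{w+1}$ of \eqref{T0}. The binomial quotients telescope into rational functions of $q$ and $w$; for instance
\begin{equation*}
\frac{T_1}{T_0}=\frac{(q-1)\,w(w+1)}{2q\,(q-w+1)}.
\end{equation*}
Using the standing hypothesis $w<(q+1)/2$ of \eqref{w<(q+1)/2}, each denominator factor $q-w+j$ exceeds $(q+1)/2$, so these ratios are majorised by clean expressions of order $\lambda^{i}/i!$ with $\lambda\sim w^{2}/(2q)$, reflecting the Poissonian behaviour of the number of collinear pairs through $A$. Recalling that $R_{w,q}=\pi<e^{-w^{2}/(2q+2)}$ by \eqref{eq2_pi_estimate}, a positive success probability requires $(q+1)^2\pi_\mu<1$, which by \eqref{pi_1,mu} reduces to establishing
\begin{equation*}
(q+1)^{2}e^{-w^{2}/(2q+2)}\sum_{i=0}^{\mu-1}\frac{T_i}{T_0}<1.
\end{equation*}

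Finally I would substitute $w=\lceil d\sqrt{(2q+2)\ln((q+1)^{2})}\rceil$ with $d=1.2,1.3,1.4$ for $\mu=2,3,4$. Then $e^{-w^{2}/(2q+2)}\le(q+1)^{-2d^{2}}$, while the size of the set is $\#\mathcal{K}_{w+1}=w+1\le 2d\sqrt{(q+1)\ln(q+1)}+2$, reproducing the announced constants $2d=2.4,2.6,2.8$. The displayed inequality then takes the one-variable form $(q+1)^{2-2d^{2}}P_{\mu-1}\big(\ln(q+1)\big)<1$, where $P_{\mu-1}$ is an explicit polynomial of degree $\mu-1$ coming from the bounds on the $T_i/T_0$. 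Since $2-2d^{2}<0$ in all three cases, the negative power of $(q+1)$ eventually overwhelms the logarithmic polynomial; to pin down the exact thresholds $q\ge 97,181,125$ I would verify that the relevant function of $q$ is decreasing beyond these points and check the inequality numerically at the boundary value.

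The delicate step is this last one. The asymptotics already show that any fixed $d>1$ suffices, but extracting sharp finite thresholds forces one to keep the ceiling correction in $w$, the $\delta$-terms of \eqref{eq1_delta}, and the exact—not merely asymptotic—shapes of $T_2/T_0$ and of the two-part $T_3/T_0$; moreover the ceiling simultaneously shrinks $e^{-w^{2}/(2q+2)}$ and inflates the polynomial sum, so its effect on the threshold is not monotone. Producing upper bounds on these ratios that are at once clean and tight enough for the boundary check to succeed precisely at $q=97,181,125$, rather than forcing a larger threshold, is where the main effort lies.
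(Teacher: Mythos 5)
Your proposal is correct and follows the paper's own proof of Theorem \ref{th4} essentially step for step: the same exact counts $T_1$, $T_2$, and the two-case $T_3$ (formulas \eqref{T1}--\eqref{T3}), the same normalization by $T_0$ with $R_{w,q}<e^{-w^{2}/(2q+2)}$ under the hypothesis \eqref{w<(q+1)/2}, the same substitution $w=\lceil d\sqrt{(2q+2)\ln((q+1)^{2})}\rceil$ with $d=1.2,1.3,1.4$, and the same computer-aided check that $(q+1)^{2}\pi_{\mu}<1$ at the thresholds $q\ge 97,181,125$. The only difference is cosmetic: the delicacy you flag about the ceiling is resolved in the paper by introducing $\widehat{w}$ and the inequalities \eqref{eq4_useful} (e.g.\ $w^{2}\pm w<2\widehat{w}^{2}$, $\widehat{w}\leq w<\widehat{w}+1$), which majorize all $w$-dependent factors by functions of $\widehat{w}$ alone before the numerical verification.
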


\begin{proof}
Let
\begin{equation}
\widehat{w}=d\sqrt{(2q+2)\ln \left( (q+1)^{2}\right) }.  \label{overline_w_d}
\end{equation}
We first establish some inequalities that will be useful in the
proof below. From (\ref{eq2_pi=_start}), (\ref{eq2_pi_estimate}),
(\ref{T0}), (\ref{Rwq}), (\ref{w<(q+1)/2}), and
(\ref{overline_w_d}), it is easy to see that
\begin{align}
& q+1-2w>0,\text{ }2(q+f-w)>q+1\text{ if }f\geq 1,\text{ }\widehat{w}\leq w<
\widehat{w}+1,\text{ }  \label{eq4_useful} \\
& R_{w,q}<e^{-\frac{w^{2}}{2q+2}}\leq e^{-\frac{\text{ }\widehat{w}^{2}}{2q+2
}},\text{ }w^{2}\pm w<2\text{ }\widehat{w}^{2},\text{ }(w-2)(w-1)w(w+1)<2
\widehat{w}^{4},  \notag \\
& (w-1)w(w+1)<3\widehat{w}^{3},\text{ }(w-4)(w-3)(w-2)(w-1)w(w+1)<\text{ }
\widehat{w}^{6}.  \notag
\end{align}
A set $\mathcal{K}_{w+1}$ covers $A$ exactly once if one line
through $A$ contains two points of $\mathcal{K}_{w+1}$, whereas
each of the remaining $q$ lines contains at most one point of
$\mathcal{K}_{w+1}.$ So,
\begin{equation}
T_{1}=(q+1)\binom{q}{2}\cdot q^{w-1}\binom{q}{w-1}.  \label{T1}
\end{equation}

A set $\mathcal{K}_{w+1}$ covers $A$ exactly twice if some two
lines through $A$ contains two points of $\mathcal{K}_{w+1}$,
whereas each of the remaining $q-1$  lines contains at most one
point of $\mathcal{K} _{w+1}. $ So,
\begin{equation}
T_{2}=\binom{q+1}{2}\binom{q}{2}^{2}\cdot q^{w-3}\binom{q-1}{w-3}.
\label{T2}
\end{equation}

Finally, a set $\mathcal{K}_{w+1}$ covers $A$ exactly $3$ times in the
following two cases:

- one line through $A$ contains three points of
$\mathcal{K}_{w+1}$, whereas each of the remaining $q$  lines
contains at most one point of $\mathcal{K} _{w+1};$

- three lines through $A$ contain two points of
$\mathcal{K} _{w+1}$, whereas each of the remaining $q-2$ lines
contains at most one point of $\mathcal{K}_{w+1}.$

Therefore,
\begin{equation}
T_{3}=(q+1)\binom{q}{3}\cdot q^{w-2}\binom{q}{w-2}+\binom{q+1}{3}\binom{q}{2}
^{3}\cdot q^{w-5}\binom{q-2}{w-5}.  \label{T3}
\end{equation}

Let $\mu =2.$ Taking into account (\ref{w =}), (\ref{T0}),
(\ref{pi_1,mu}), (\ref{overline_w_d}) -- (\ref{T1}), we have
\begin{align*}
& \pi _{2}=R_{w,q}\left( 1+\frac{T_{1}}{T_{0}}\right) =R_{w,q}\left( 1+\frac{
w(w+1)(q-1)}{2q(q+1-w)}\right) <
e^{-\frac{w^{2}}{2q+2}}\left( 1+\frac{w(w+1)
}{2(q+1-w)}\right) = \\
&\frac{2q+2+w^{2}-w}{2(q+1-w)}e^{-\frac{w^{2}}{2q+2}}< \frac{2q+2+2\widehat{w
}^{2}}{q+1}e^{-\frac{\widehat{w}^{2}}{2q+2}}=\frac{2+8d^{2}\ln \left(
q+1\right) }{(q+1)^{2d^{2}}}.
\end{align*}
By a computer aided computation,
\begin{equation}
(q+1)^{2}\pi _{2}=\frac{2+8d^{2}\ln \left( q+1\right) }{(q+1)^{2d^{2}-2}}<1
\text{ if }d=1.2,\text{ }q\geq 97.  \label{eq4_(q+1)^2*pi2<1}
\end{equation}
Under condition (\ref{eq4_(q+1)^2*pi2<1}), the probability in
(\ref{K_w+1 satur}) is positive. So, taking into account
(\ref{w =}), the upper bound in (\ref{eq4_mu=2_bound}) is
proved.

Let $\mu =3.$ Taking into account (\ref{w =}), (\ref{T0}),
(\ref{pi_1,mu}), (\ref{overline_w_d}) -- (\ref{T2}), we have
\begin{align*}
& \pi _{3}=R_{w,q}\left( 1+\frac{w(w+1)(q-1)}{2q(q+1-w)}+\frac{
(w-2)(w-1)w(w+1)(q-1)^{2}}{8q^{2}(q+2-w)(q+1-w)}\right) < \\
& e^{-\frac{w^{2}}{2q+2}}\left( 1+\frac{w(w+1)}{2(q+1-w)}+\frac{
(w-2)(w-1)w(w+1)}{8(q+2-w)(q+1-w)}\right) < \\
& e^{-\frac{\widehat{w}^{2}}{2q+2}}\left( 1+\frac{2\widehat{w}^{2}}{q+1}+
\frac{2\widehat{w}^{4}}{2(q+1)^{2}}\right) =\frac{1+8d^{2}\ln \left(
q+1\right) +16d^{4}\ln ^{2}\left( q+1\right) }{\left( q+1\right) ^{2d^{2}}}.
\end{align*}
By a computer aided computation,
\begin{equation}
(q+1)^{2}\pi _{3}=\frac{1+8d^{2}\ln \left( q+1\right) +16d^{4}\ln ^{2}\left(
q+1\right) }{\left( q+1\right) ^{2d^{2}-2}}<1\text{ if }d=1.3,\text{ }q\geq
181.  \label{eq4_(q+1)^2*pi3<1}
\end{equation}
Under condition (\ref{eq4_(q+1)^2*pi3<1}), the probability in
(\ref{K_w+1 satur}) is positive. So, taking into account
(\ref{w =}), the upper bound in (\ref{eq4_mu=3_bound}) is
proved.

Finally, let $\mu =4.$ Taking into account (\ref{w =}),
(\ref{T0}), (\ref{pi_1,mu}), (\ref{overline_w_d}) --
(\ref{T3}), we have
\begin{align*}
 &\pi _{4}=\pi _{3}+R_{w,q}\frac{T_{3}}{T_{0}}=\pi _{3}+R_{w,q}\left( \frac{
(w-1)w(w+1)(q-1)(q-2)}{6q^{2}(q+2-w)(q+1-w)}+\right. \\
& \left. \frac{(w-4)(w-3)(w-2)(w-1)w(w+1)(q-1)^{3}}{
48q^{3}(q+3-w)(q+2-w)(q+1-w)}\right) <\pi _{3}+e^{-\frac{\widehat{w}^{2}}{
2q+2}}\left( \frac{3\widehat{w}^{3}}{(q+1)^{2}}+\frac{\widehat{w}^{6}}{
6(q+1)^{3}}\right).
\end{align*}
It can be checked by computer that
\begin{equation}
(q+1)^{2}\pi _{4}<1\text{ if }d=1.4,\text{ }q\geq 125.
\label{eq4_(q+1)^2*pi4<1}
\end{equation}
Under condition (\ref{eq4_(q+1)^2*pi4<1}), the probability in
(\ref{K_w+1 satur}) is positive. So, taking into account
(\ref{w =}), also (\ref{eq4_mu=4_bound}) is proved.
\end{proof}

\section{Upper bounds on the smallest size of a saturating set in the
projective space $PG(N,q)$ \label{sec_space}}

A point set $S\subset PG(N,q) $ is \emph{saturating} if any
point of $PG(N,q)\setminus S$ is collinear with two points in
$S$. Results on saturating sets in $PG(N,q)$ can be found for
instance in \cite
{BrPlWi,DavCovRad2,DGMP-AMC,DMP-JCTA2003,Giul2013Survey,Janwa1990,Ughi}
and the references therein.

Let $[n,n-r]_{q}R$ be a linear $q$-ary code of length $n,$
codimension $r,$ and covering radius$~R.$ The homogeneous
coordinates of the points of a saturating set with size $n$ in
$PG(r-1,q),$ form a parity check matrix of an $[n,n-r]_{q}2$
code; see \cite
{BrPlWi,CHLS-bookCovCod,DavCovRad2,DGMP-AMC,Giul2013Survey,Janwa1990}.
Let $s(N,q)$ be the smallest size of a saturating set in
$PG(N,q),$ $N\geq 3$. In terms of covering codes, we recall the
equality $s(N,q)=\ell (2,N+1,q)$.
\begin{proposition}
\label{prop_space}For the smallest size $s(N,q)$ of a
saturating set in the projective space $PG(N,q)$ and for the length function $\ell (2,N+1,q),$
the following upper bound holds:
\begin{align}
&s(N,q)=\ell (2,N+1,q)\leq \label{eq5_SatSatSpace}\\
&\left(2\sqrt{(q+1)\ln
(q+1)}+2\right)q^{\frac{N-2}{2}}+2q^{\frac{N-4}{2}}\thicksim 2q^{\frac{N-1}{2}}\sqrt{\ln q},~N=2t-2\geq
6,
\notag
\end{align}
where $t=4,6$ and $t\ge8$, $N\neq 8,12$, $q\geq79$.
\end{proposition}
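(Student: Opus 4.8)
The plan is to prove the bound by induction on the dimension, taking the planar estimate of Theorem~\ref{th1} as the seed and lifting it to $PG(N,q)$ by the inductive (``$q$-concatenating'') constructions of \cite{DGMP-AMC,DavCovRad2,BDGMP-AMC2015}, which also consume the $(1,\mu)$-saturating estimate \eqref{eq1_mu_satsetsize}. In coding terms I start from the $[n_0,n_0-3]_q2$ code whose parity-check columns are the points of a planar saturating set with $n_0\le 2\sqrt{(q+1)\ln(q+1)}+2$, and repeatedly apply a construction that raises the codimension while keeping the covering radius equal to $2$ and multiplying the length by a fixed power of $q$. Writing $S_0:=2\sqrt{(q+1)\ln(q+1)}+2$ and recalling that the target codimension is $N+1=2t-1$ while the seed has codimension $3$, the total codimension increase is $2(t-2)$; as each increase of the codimension by $2$ multiplies the length by a factor $q$, the seed length is multiplied by $q^{\,t-2}=q^{(N-2)/2}$, which already explains the leading term $S_0\,q^{(N-2)/2}$ of \eqref{eq5_SatSatSpace}.

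First I would isolate one inductive step in the clean form $s(N,q)\le q^{2}\,s(N-4,q)$, with covering radius $2$ preserved. The key point is that this step is self-similar for the target estimate: if $s(N-4,q)\le S_0\,q^{(N-6)/2}+2\,q^{(N-8)/2}$ then $q^{2}\bigl(S_0\,q^{(N-6)/2}+2\,q^{(N-8)/2}\bigr)=S_0\,q^{(N-2)/2}+2\,q^{(N-4)/2}$, so iterating produces no correction beyond the displayed $2\,q^{(N-4)/2}$ and reproduces \eqref{eq5_SatSatSpace} exactly. Because the step advances $N$ by $4$, it keeps $N$ inside a fixed residue class modulo $4$; here $t$ even corresponds to $N\equiv2\pmod4$ and $t$ odd to $N\equiv0\pmod4$, the dichotomy visible in the hypotheses of the Proposition.

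Next I would supply the base cases and propagate them. For $N\equiv2\pmod4$ the step-of-$4$ recursion descends all the way to the plane $N=2$, so Theorem~\ref{th1} alone furnishes the base and hence the whole family $N=6,10,14,\dots$ (that is $t=4,6,8,\dots$). For $N\equiv0\pmod4$ there is no planar base; instead one enters the class by a single parity-changing step $s(N,q)\le q\,s(N-2,q)$, again self-similar since $q\bigl(S_0\,q^{(N-4)/2}+2\,q^{(N-6)/2}\bigr)=S_0\,q^{(N-2)/2}+2\,q^{(N-4)/2}$, which amalgamates an already-constructed saturating set with a $(1,\mu)$-saturating ingredient from Theorems~\ref{th3}--\ref{th4}; this is where \eqref{eq1_mu_satsetsize} enters. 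The restriction comes from the admissibility of that ingredient: the bound \eqref{eq1_Dmu} on $D_\mu$ and the thresholds of Theorem~\ref{th4} hold only for $q$ and $\mu$ large enough, so the parity-changing step is available at $N=14\to N=16$ but not at $N=6\to8$ or $N=10\to12$. This is exactly what excludes $t=5,7$ (that is $N=8,12$) and fixes $q\ge79$ as the common validity range of the starting and auxiliary codes. The asymptotic form $2q^{(N-1)/2}\sqrt{\ln q}$ then follows from $S_0\sim2\sqrt{q\ln q}$, which makes the first term dominate the correction $2\,q^{(N-4)/2}$.

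The hard part is not the exponent bookkeeping, which the self-similarity above renders transparent, but verifying the hypotheses of the amalgamation at each base and each step. One must check that the combined point set genuinely has covering radius exactly $2$ rather than $3$ or $4$ (the usual failure mode when joining covering codes), that the planar seed and the $(1,\mu)$-saturating ingredient can be placed disjointly and in the mutual position the construction demands inside $PG(N,q)$, and that the thresholds on $q$ and $\mu$ inherited from \eqref{eq1_Dmu} and Theorem~\ref{th4} hold simultaneously. Confirming that the parity-changing step first becomes admissible precisely at $N=14\to16$, and hence that no valid base exists at $N=8$ or $N=12$, is the delicate point that pins down the exact list $t=4,6$ and $t\ge8$ with $N\neq8,12$ and the bound $q\ge79$.
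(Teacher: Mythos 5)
There is a genuine gap, and it is exactly where you locate the ``hard part'': the two inductive steps $s(N,q)\le q^{2}\,s(N-4,q)$ and $s(N,q)\le q\,s(N-2,q)$ are asserted but never established, and in this subject such clean multiplicative steps are not free---lifting a covering-radius-$2$ code to higher codimension while keeping $R=2$ requires the $q^m$-concatenating machinery with hypotheses on the seed code, which is precisely what needs citing or proving. The paper does not run a step-by-step induction at all: it makes a single application of the packaged construction of \cite[Ex.~6]{DavCovRad2} (see also \cite[Th.~4.4]{DGMP-AMC}) to the $[n_q,n_q-3]_q2$ code coming from Theorem~\ref{th1}, and that construction directly outputs $[n,n-r]_q2$ codes with $r=2t-1\ge 7$, $r\ne 9,13$, and $n=n_qq^{t-2}+2q^{t-3}$ under the one hypothesis $q+1\ge 2n_q$. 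Your exponent bookkeeping is consistent with this length formula (indeed $n_{t+1}=qn_t$), but consistency is not a construction; without proving your two steps, the proposal has no content beyond the arithmetic.

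Moreover, your account of the side conditions is wrong on both counts. First, $q\ge 79$ is not ``the common validity range'' of planar and $(1,\mu)$-ingredients: it is just the smallest prime power satisfying $q+1\ge 2n_q$ with $n_q=2\sqrt{(q+1)\ln(q+1)}+2$, the admissibility condition of the cited construction on the seed code. Had the thresholds of Theorem~\ref{th4} been needed, $q\ge79$ could not have been stated, since those thresholds are $q\ge 97$, $181$, $125$ for $\mu=2,3,4$. Second, no $(1,\mu)$-saturating sets enter this proposition at all---they are used only in Proposition~\ref{prop_space_mu}---so your proposed mechanism for the exclusions, a parity-changing step consuming a $(1,\mu)$-ingredient that ``first becomes admissible at $N=14\to16$'', is invented. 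The exclusions $N\ne 8,12$ (equivalently $r\ne 9,13$, i.e.\ $t\ne 5,7$) are simply the codimensions that the construction of \cite{DavCovRad2,DGMP-AMC} does not cover; they are inherited restrictions, not consequences of any threshold on $\mu$ or $q$. To repair the proposal you would either have to prove your two inductive steps from scratch (including the covering-radius verification you rightly flag) or, as the paper does, invoke the existing construction and check its single hypothesis.
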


\begin{proof}
By Theorem \ref{th1}, there is a saturating set with size $n_q=2\sqrt{(q+1)\ln
(q+1)}+2$
in $PG(2,q)$. From the corresponding $[n_{q},n_{q}-3]_{q}2$
code, by using the construction of \cite[Ex.\thinspace
6]{DavCovRad2}, see also \cite[Th.\thinspace 4.4]{DGMP-AMC},
one can obtain an $[n,n-r]_{q}2$ code with $r=2t-1\geq
7,~r\neq 9,13,~n=n_{q}q^{t-2}+2q^{t-3}$, under condition $q+1\geq 2n_q$ that holds for $q\ge79$.
\end{proof}

Surveys of the known $[n,n-r]_{q}2$ codes and saturating sets
in $PG(N,q)$ can be found in \cite{DavCovRad2,DGMP-AMC,Giul2013Survey}.
In many cases bound (\ref{eq5_SatSatSpace}) is better
than the known ones.

A point set $S\subset PG(N,q)$ is $(1,\mu) $-\emph{saturating} if
for every point $Q$ of $PG(N,q)\setminus S$ the
number of secants of $S$ through $Q$ is at least $\mu $,
counted with multiplicity. The multiplicity of a secant $\ell $
is computed as ${\binom{{ \#(\ell \,\cap S)}}{{2}}}$ \cite{BDGMP-AMC2015,BDGMP-arXiv2015}.

Let $[n,n-r]_{q}(R,\mu )$ be a linear $q$-ary $(R,\mu )$-MCF
code of length $ n,$ codimension $r,$ and covering radius$~R.$
The points of a $(1,\mu) $ -saturating set with size $n$ in $PG(r-1,q)$
 form a
parity check matrix of an $[n,n-r]_{q}(2,\mu ) $ code; see
\cite{BDGMP-AMC2015,Giul2013Survey,PDBGM-ExtendAbstr}. Let
$s_{\mu }(N,q)$ be the smallest size of a $(1,\mu) $-saturating set
in $ PG(N,q),$ $N\geq 3$.

\begin{proposition}
\label{prop_space_mu}For the smallest size $s_{\mu }(N,q)$ of a
$(1,\mu) $-saturating set in the projective space $PG(N,q)$, $N\geq
4$  even, and for the $\mu $-length function, the following
upper bound holds:
\begin{equation}
s_{\mu }(N,q)=\ell _{\mu }(2,N+1,q)\leq q^\frac{N-2}{2}n_{q,\mu }+\max (3,\mu )
\frac{q^\frac{N-2}{2}-1}{q-1}\thicksim 2D_{\mu }q^{\frac{N-1}{2}}\sqrt{\ln q},
\end{equation}
where $n_{q,\mu }=2D_{\mu }\sqrt{(q+1)\ln (q+1)}+2,$ $D_{\mu }$ is as in
\emph{(\ref{eq1_Dmu})}, $q^{\frac{N-2}{2}}+1-\mu \geq n_{q,\mu }.$
\end{proposition}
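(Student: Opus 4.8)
The plan is to follow the proof of Proposition~\ref{prop_space} step by step, replacing the ordinary saturating seed set by a $(1,\mu)$-saturating one and replacing the $[n,n-r]_q2$ lifting by its multiple-covering analogue. First I would invoke Theorem~\ref{th3} (equivalently Corollary~\ref{cor3}) to obtain in $PG(2,q)$ a $(1,\mu)$-saturating set of size $n_{q,\mu}=2D_\mu\sqrt{(q+1)\ln(q+1)}+2$; in coding terms this is an $[n_{q,\mu},\,n_{q,\mu}-3]_q(2,\mu)$ MCF code, which will serve as the base of the induction.

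Next I would apply the inductive MCF construction of \cite{BDGMP-AMC2015}, the $(2,\mu)$-multiple-covering counterpart of the construction \cite{DavCovRad2,DGMP-AMC} used in Proposition~\ref{prop_space}. The single feature I need from it is that one application raises the codimension by $2$ (that is, passes from $PG(M,q)$ to $PG(M+2,q)$) while transforming the size by the affine rule $s\mapsto qs+\max(3,\mu)$. Starting from $s_0=n_{q,\mu}$ and iterating $k=\tfrac{N-2}{2}$ times to reach $PG(N,q)$, this recursion unrolls to
\begin{equation*}
s_k=q^{k}n_{q,\mu}+\max(3,\mu)\bigl(q^{k-1}+\cdots+q+1\bigr)=q^{\frac{N-2}{2}}n_{q,\mu}+\max(3,\mu)\frac{q^{\frac{N-2}{2}}-1}{q-1},
\end{equation*}
which is precisely the asserted size; the asymptotic estimate $\thicksim 2D_\mu q^{(N-1)/2}\sqrt{\ln q}$ follows by retaining only the dominant term $q^{(N-2)/2}\cdot 2D_\mu\sqrt{(q+1)\ln(q+1)}$. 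The remaining task is to certify that every application of the construction is admissible, and this is where the hypothesis $q^{(N-2)/2}+1-\mu\ge n_{q,\mu}$ enters: it plays the role of the condition $q+1\ge 2n_q$ in Proposition~\ref{prop_space}, ensuring that at each stage the current set is small enough, relative to the number of available points in the new coordinate directions, for the lifted set to retain the $\mu$-fold covering property. I would verify that the most stringent instance of the per-step requirement is exactly the displayed inequality, so that checking it once validates all $k=\tfrac{N-2}{2}$ steps simultaneously.

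The step I expect to be the genuine obstacle is not the bookkeeping of the recursion but the verification that the lifting preserves multiplicity: one must show that each point of $PG(M+2,q)\setminus S$ still lies on secants of $S$ of total multiplicity at least $\mu$, where multiplicity is counted as $\binom{\#(\ell\cap S)}{2}$ over secants $\ell$. It is exactly this demand that pins the per-step overhead at $\max(3,\mu)$ rather than the smaller overhead of the $\mu=1$ case of Proposition~\ref{prop_space}: intuitively three points are needed to reproduce the structural $1$-covering and $\mu$ points to attain the prescribed multiplicity, and one takes whichever is larger.
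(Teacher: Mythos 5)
Your overall architecture coincides with the paper's: take the seed $(1,\mu)$-saturating set of size $n_{q,\mu}$ in $PG(2,q)$ from Theorem~\ref{th3}, pass to the corresponding $[n_{q,\mu},n_{q,\mu}-3]_{q}(2,\mu)$ MCF code, lift it by the inductive construction of \cite{BDGMP-AMC2015}, and translate back through the one-to-one correspondence between $(1,\mu)$-saturating sets and $(2,\mu)$-MCF codes; your unrolled size formula and the asymptotics are also the right ones. The actual proof in the paper is exactly this, in two lines, citing \cite[Cor.~6.5]{BDGMP-AMC2015}.

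The genuine gap is in your iterated reading of that construction. The paper applies the corollary \emph{once}: it is a $q^m$-concatenating-type construction that jumps from codimension $3$ to codimension $3+2m=N+1$ in a single shot, multiplying the length by $q^m$ and adding $\max(3,\mu)\,\frac{q^m-1}{q-1}$, under a hypothesis on the \emph{seed} length, which with $m=\frac{N-2}{2}$ is literally the displayed condition $n_{q,\mu}\leq q^{\frac{N-2}{2}}+1-\mu$. If instead you iterate the codimension-plus-two step $k=\frac{N-2}{2}$ times, each application carries its own admissibility condition of the same shape with $m=1$, i.e.\ roughly (current length) $\leq q+1-\mu$; already after the first step the length is about $qn_{q,\mu}+\max(3,\mu)$, which violates it, so the second and later steps are inadmissible. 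Your claim that ``the most stringent instance of the per-step requirement is exactly the displayed inequality'' is therefore backwards: under iteration the binding constraint sits at the \emph{last} step and cannot be met, whereas the displayed inequality is precisely the hypothesis of the single application with lifting factor $q^{\frac{N-2}{2}}$. The analogous point is visible in Proposition~\ref{prop_space}, where the construction from \cite{DavCovRad2} used there is likewise a one-shot family (note the exclusions $r\neq 9,13$, inexplicable for a freely iterable step). Finally, the preservation of $\mu$-fold covering multiplicity that you flag as ``the genuine obstacle'' is not something left to verify here: it is the content of the cited corollary itself.
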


\begin{proof}
By Theorem \ref{th3}, there exists a $(1,\mu) $-saturating set with
size $n_{q,\mu}$ in $ PG(2,q)$. We directly apply
\cite[Cor.\thinspace 6.5] {BDGMP-AMC2015} to the corresponding
$[n_{q,\mu },n_{q,\mu }-3]_{q}(2,\mu )$ MCF code and use the
one-to-one correspondence between $(1,\mu)$-saturating sets and
$(2,\mu )$-MCF codes.
\end{proof}

\end{document}